% ------------------------------------------------------------------------
% AMS-LaTeX Paper ********************************************************
% ------------------------------------------------------------------------
\documentclass{amsart}

\usepackage[inactive]{srcltx} % SRC Specials for DVI Searching
\usepackage{amsmath, amsthm, amscd, amsfonts, amssymb, graphicx, color, extarrows}
% Over-full v-boxes on even pages are due to the \v{c} in author's name
\vfuzz2pt % Don't report over-full v-boxes if over-edge is small

% THEOREM Environments ---------------------------------------------------
 \newtheorem{thm}{Theorem}[section]
 \newtheorem{cor}[thm]{Corollary}
 \newtheorem{lem}[thm]{Lemma}

 \theoremstyle{definition}
 
 \theoremstyle{remark}

  \newtheorem{exm}[thm]{Example}
 %\numberwithin{equation}{section}
% MATH -------------------------------------------------------------------
\newcommand{\A}{\mathcal{A}}

\newcommand{\al}{Alg\mathcal{N}}
\newcommand{\alf}{Alg_{\textbf{F}}\mathcal{N}}
\newcommand{\bo}{\textbf{B}(\mathcal{X})}
%%% ----------------------------------------------------------------------
\begin{document}

\title[Ternary derivations of nest algebras]
 {Ternary derivations of nest algebras}

\author{Hoger Ghahramani}
\thanks{{\scriptsize
\hskip -0.4 true cm \emph{MSC(2010)}:  47L35, 47B47, 16W25.
\newline \emph{Keywords}: Nest algebra, ternary (inner) derivation. \\}}

\address{Department of
Mathematics, University of Kurdistan, P. O. Box 416, Sanandaj,
Iran.}

\email{h.ghahramani@uok.ac.ir; hoger.ghahramani@yahoo.com}

\address{}

\email{}

\thanks{}

\thanks{}

\subjclass{}

\keywords{}

\date{}

\dedicatory{}

\commby{}

%%% ----------------------------------------------------------------------

\begin{abstract}
Suppose that $ \mathcal{X} $ is a (real or complex) Banach space, $dim \mathcal{X} \geq 2$, and $\mathcal{N}$ is a nest on $\mathcal{X}$, with each $N \in \mathcal{N}$ is complemented in $\mathcal{X}$ whenever $N_{-}=N$. A ternary derivation of $\al$ is a triple of linear maps $(\gamma, \delta, \tau)$ of $\al$ such that $\gamma(AB)=\delta(A)B+A\tau(B)$ for all $A,B\in \al$. We show that for linear maps $\delta, \tau$ on $\al$ there exists a unique linear map $\gamma:\al \rightarrow \al$ defined by $\gamma(A)=RA+AT$ for some $R,T \in \al$ such that $(\gamma, \delta, \tau)$ is a ternary derivation of $\al$ if and only if $\delta , \tau$ satisfy $\delta(A)B+A\tau(B)=0$ for any $A,B\in \al$ with $AB=0$. We also prove that every ternary derivation on $\al$ is an inner ternary derivation. Our results are applied to characterize the (right or left) centralizers and derivations through zero products, local right (left) centralizers, right (left) ideal preserving maps and local derivations on nest algebras.
\end{abstract}

%%% ----------------------------------------------------------------------
\maketitle
%%% ----------------------------------------------------------------------

\section{Introduction}
Let $\mathcal{A}$ be an (associative) algebra. A \textit{ternary derivation} of $\mathcal{A}$ is a triple of linear maps $(\gamma, \delta, \tau)$ of $\A$ such that 
\[ \gamma(ab)=\delta(a)b+a\tau(b)\]
for all $a,b\in \mathcal{A}$. The set of all ternary derivations of $\mathcal{A}$ is denoted
by $Tder(\mathcal{A})$. The notion of ternary derivations generalizes several classes of linear mappings; for example, if $\gamma = \delta = \tau$, then $\gamma$ is a derivation of $\mathcal{A}$, and if $\gamma = \delta$, then $\gamma$ is a generalized derivation of $\mathcal{A}$. If $\A$ is unital with unity $1$, routine verifications show that $\gamma:\A \rightarrow \A$ is a generalized derivation if and only if $\gamma(ab)=\gamma(a)b+a\gamma(b)-a\gamma(1)b$ for all $a,b\in \A$. The notion of ternary derivation was introduced by Jimen$\acute{\text{e}}$z-Gestal and P$\acute{\text{e}}$rez-Izquierdo in \cite{jim}. They characterized ternary derivations of the generalized Cayley-Dickson algebras over a field of characteristic not 2 and 3. In \cite{jim2}, ternary derivations of finite-dimensional real division algebras were studied. More recently, ternary derivations of separable associative and Jordan algebras were described by Shestakov \cite{she}. Ternary derivatives are also defined and studied on nonassociative algebras \cite{she2, zh}. Refer to \cite{bar, pe} for a study of the motivation for defining ternary derivations and related content. 
\par 
In \cite{she}, the Inner ternary derivations is defined as follows: a ternary derivation $(\gamma, \delta, \tau)$ of an algebra $\A$ is called an \textit{inner ternary derivation} if there exist $a, b, c \in \A$ such that
\[ (\gamma, \delta, \tau) =(L_a + R_b, L_a + R_c, - L_c + R_b).\]
Recall that $L_a(b) := ab := R_b(a)$ for any $a,b\in \A$, are the \textit{left} and \textit{right multiplication operators}. Clearly, each inner ternary derivation is a ternary derivation. The converse is, in general, not true (see \cite{she}). Among the interesting problems in the theory of derivations is the identification of rings whose derivations are inner. Many studies have been performed in this regard and it has a long history. We may refer to \cite{ash} and the references therein for more information. So it seems reasonable to consider the problem of innerness of ternary derivations. In \cite{she}, innerness of ternary derivations was proved on some algebras. A characterization for a ternary derivation to be inner on triangular algebras is given in \cite{bar}. In this paper we show that any ternary derivation on a nest algebra is an inner ternary derivation.
\par 
Let $\mathcal{A}$ be an algebra and $(\gamma, \delta, \tau)\in Tder(\mathcal{A})$. Then $\delta , \tau$ satisfy
\[ a,b\in \mathcal{A}, \, \, ab=0 \Longrightarrow \delta(a)b+a\tau(b)=0. \quad (Z) \]
The following example shows that the converse of this observation as follows is not necessarily true.
\begin{exm}
Let $\mathbb{C}$ be the field of complex numbers. Consider the $\mathbb{C}$-algebra $\A$ of the form
\[\A :=\Bigg\lbrace\begin{pmatrix}
 a & b & c\\
  0 & a & d\\
  0 & 0 & a
\end{pmatrix} \, : \, a,b,c,d \in\mathbb{C} \Bigg\rbrace \]
under the usual matrix operations. $\A$ is a unital algebra with the identity matrix $I$. Let $X=\begin{pmatrix}
 0 & 0 & 0\\
  0 & 0 & 1\\
  0 & 0 & 0
\end{pmatrix} $ and define $\mathbb{C}$-linear maps $\delta: \A \rightarrow \A$ and $\tau: \A \rightarrow \A$ by $\delta(A):=R_{X}(A)=AX$ and $\tau(A):= L_{X}(A)=XA$. If $AB=0$, where $A=\begin{pmatrix}
 a & b & c\\
  0 & a & d\\
  0 & 0 & a
\end{pmatrix} $ and $B=\begin{pmatrix}
 a^{\prime} & b^{\prime} & c^{\prime}\\
  0 & a^{\prime} & d^{\prime}\\
  0 & 0 & a^{\prime}
\end{pmatrix} $, then $aa^{\prime}=0$ and $ab^{\prime}+ ba^{\prime}=0$. So $ba^{\prime}=0$ and $AXB=0$. Hence 
\[ \delta(A)B+A\tau(B)=0.\]
Suppose that there is a $\mathbb{C}$-linear map $\gamma : \A \rightarrow \A$ such that $(\gamma, \delta, \tau)\in Tder(\mathcal{A})$. Therefore, $\gamma(AB)=2AXB$ for all $A,B \in \A$. If $A=I$, then $\gamma(B)=2XB$ for all $B\in \A$ and if $B=I$, then $\gamma(A)=2AX$ for all $A\in \A$. Hence $AX=XA$ for all $A\in \A$. But $X$ is not belong to $Z(\A)$ (the centre of $\A$). This contradiction shows that the linear maps $\delta, \tau$ on $\A$ satisfy $(Z)$, but there is not a linear map $\gamma$ on $\A$ such that $(\gamma, \delta, \tau)\in Tder(\mathcal{A})$.
\end{exm}
Given this example and the previous discussion, the following question naturally arises.
\par 
\textbf{Question} Which algebra $ \A $ has the following property: for given linear maps $\delta , \tau$ on $\A$ satisfying $(Z)$, there exists a linear map $\gamma$ on $\A$ such that $(\gamma, \delta, \tau)\in Tder(\mathcal{A})$?
\\
\par 
One of the interesting issues in mathematics is the determination of the structure of linear maps on algebras that act through zero products in the same way as certain mappings, such as homomorphisms, derivations, centralizers, etc. (see, for instance, \cite{bre1, fad, gh1, gh2, gh3, qi} and the references therein). Among these issues, one can point out the problem of characterizing linear maps $\delta, \tau$ on an algebra $\A$ which satisfy $(Z)$, (\cite{barar, ben, lee}, among others). Given this characterization, it is possible to obtain an answer to the question. In \cite[Theorem 2.1]{barar}, it was proved that if $\A$ is a unital standard operator algebra on a complex Banach space $\mathcal{X}$ with $dim \mathcal{X} \geq 2$ and $\delta, \tau : \A \rightarrow \bo$ are linear maps satisfying $(Z)$, then there exist $ R , S, T  \in \bo$ such that $ \delta (A) = A S   - R A$, $\tau ( A) = A T - S A$ for all $ A \in \mathcal{A} $. According to this result, one can find a positive answer to the question on standard operator algebras. In addition, it can be shown that on standard operator algebras with the above conditions every ternary derivation is an inner ternary derivation. Using \cite[Proposition 2.12]{gh0} and its proof, it can be concluded that the answer to the question is correct for zero product determined algebras. In general, we do not know that a nest algebra is a zero product determined algebra. In this paper, by applying operator theory methods, we show that the answer to the question is positive for nest algebras on Banach spaces under some suitable assumption. In fact, the following theorem is the main result of the article.
\begin{thm}\label{main}
Let $\mathcal{X}$ be a (real or complex) Banach space with $dim \mathcal{X} \geq 2$, let $\mathcal{N}$ be a nest on $\mathcal{X}$, with each $N\in \mathcal{N} $ is complemented in $\mathcal{X}$ whenever $N_{-}=N$, and let $\delta , \tau : \al \rightarrow \al$ be linear maps. Then $\delta, \tau $ satisfy $(Z)$ if and only if there exits a unique linear map $\gamma : \al \rightarrow \al$ defined by $\gamma(A)=RA+AT$ for some $R,T \in \al$ such that $(\gamma, \delta, \tau)\in Tder(\al)$.
\end{thm}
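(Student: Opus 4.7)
The plan is to prove the equivalence in two directions, with the bulk of the work on the ``only if'' direction. The ``if'' direction is immediate: if $\gamma(A)=RA+AT$ with $R,T\in\al$ and $(\gamma,\delta,\tau)$ is a ternary derivation, then for any $A,B\in\al$ with $AB=0$ one has $\delta(A)B+A\tau(B)=\gamma(AB)=R(AB)+(AB)T=0$, so $\delta,\tau$ satisfy $(Z)$.

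For the converse, my strategy is to first prove a stronger structural result: if $\delta,\tau$ satisfy $(Z)$, then there exist $R,S,T\in\al$ such that $\delta(A)=AS-RA$ and $\tau(B)=BT-SB$ for all $A,B\in\al$. Once this is established, setting $\gamma(C):=-RC+CT$ gives $\gamma(AB)=-R(AB)+(AB)T=(AS-RA)B+A(BT-SB)=\delta(A)B+A\tau(B)$, which is the ternary derivation property, and $\gamma$ already has the required form. Uniqueness of $\gamma$ comes for free: any $\gamma$ satisfying $\gamma(AB)=\delta(A)B+A\tau(B)$ must satisfy $\gamma(B)=\gamma(IB)=\delta(I)B+\tau(B)$ and is thus determined by $\delta,\tau$ alone.

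The structural result is the nest-algebra counterpart of \cite[Theorem 2.1]{barar}, which gives exactly this form for standard operator algebras, and the plan is to adapt its idempotent-based technique to $\al$. The hypothesis that $N$ is complemented whenever $N_-=N$ is precisely what makes the adaptation possible: together with the automatic idempotents at jump points, it supplies a rich family of idempotents $P\in\al$ projecting onto elements of $\mathcal{N}$. For such a $P$ and arbitrary $A\in\al$, both $A-PA$ and $A-AP$ lie in $\al$ and satisfy $P(A-PA)=0$ and $(A-AP)P=0$, so $(Z)$ yields the key relations $\delta(P)(A-PA)+P\tau(A-PA)=0$ and $\delta(A-AP)P+(A-AP)\tau(P)=0$. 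Expanding by linearity, specialising to $A=I$, and comparing, one extracts partial information about $\delta$ and $\tau$; combining this with a Peirce-type decomposition into the four corners $P\al P$, $P\al(I-P)$, $(I-P)\al P$, $(I-P)\al(I-P)$ (all of which live in $\al$ even though $I-P$ itself need not), and iterating over the chain of idempotents coming from the nest, I expect to extract the global operators $R,S,T\in\al$ and verify the claimed formulas for $\delta$ and $\tau$.

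The principal obstacle will be the globalisation step over the nest. At jump points of $\mathcal{N}$ the Peirce calculation is essentially finite-dimensional and clean; at continuous points, where only the complementability hypothesis is in force, the local identities must be patched together across the continuum of the nest. The delicate part is therefore to handle the continuous part of $\mathcal{N}$---presumably through rank-one operators of the form $x\otimes f\in\al$ with $x\in N$ and $f$ vanishing on $N_-$, together with their zero products---in order to pass from local inner-like identities to the global expressions $\delta(A)=AS-RA$, $\tau(B)=BT-SB$. Everything else, namely the ``if'' direction, the verification of the ternary derivation property once the form is known, and the uniqueness of $\gamma$, is formal.
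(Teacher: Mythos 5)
Your ``if'' direction and your uniqueness argument ($\gamma(B)=\gamma(IB)=\delta(I)B+\tau(B)$, so $\gamma$ is forced) are correct and match the paper. But the core of the theorem --- producing the global operators --- is left as an acknowledged plan rather than a proof: you write that you ``expect to extract'' $R,S,T$ by patching Peirce-type identities over the chain of idempotents associated to $\mathcal{N}$, and you yourself flag the continuous part of the nest as the principal obstacle. That obstacle is precisely where your route stalls, because the family of idempotents projecting onto elements of $\mathcal{N}$ is far too small to control $\delta$ and $\tau$ on all of $\al$, and no mechanism is given for passing from local corner identities to a single triple $R,S,T$ across a continuum of nest elements. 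A direct transplant of \cite[Theorem 2.1]{barar} does not work here: that argument leans on the abundance of rank-one (indeed all finite-rank) operators in a standard operator algebra, and the analogous supply in $\al$ has to be organised differently.

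The paper closes exactly this gap by a different mechanism, which your plan does not identify. First, one applies $(Z)$ to the products $AP(I-P)=0$, $A(I-P)P=0$, $P(I-P)A=0$, $(I-P)PA=0$ for \emph{arbitrary} idempotents $P\in\al$, obtaining the identities
$\delta(AP)+AP\tau(I)=\delta(A)P+A\tau(P)$ and $\tau(PA)+\delta(I)PA=\delta(P)A+P\tau(A)$. Second --- and this is the step your proposal is missing --- the complementation hypothesis enters only through \cite[Lemma 3.2]{hou}: the finite-rank ideal $\alf$ lies in the linear span of the idempotents of $\al$, so by linearity these identities hold with $P$ replaced by any $F\in\alf$; since $\alf$ is SOT-dense in $\al$ and separates points (Lemma~\ref{p2}), one upgrades them to the statements that $A\mapsto\delta(A)-\delta(I)A$ is a genuine derivation of $\al$ and that $\tau(A)-A\tau(I)=\delta(A)-\delta(I)A$. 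Finally, the global operators come not from any patching argument but from citing that every derivation of a nest algebra on a Banach space is automatically continuous \cite{che} and every continuous derivation is inner \cite{de0}; this yields $S$ with $\delta(A)-\delta(I)A=AS-SA$, whence $R=\delta(I)-S$, $T=\tau(I)+S$. Unless you intend to reprove the innerness of derivations of $\al$ from scratch, your proposal needs this reduction (or an equivalent substitute) to be a proof.
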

In addition to answering the question, this theorem also gives us the innernes of ternary derivations. We have the following corollary.
\begin{cor}\label{inner}
Let $\mathcal{X}$ be a (real or complex) Banach space with $dim \mathcal{X} \geq 2$, let $\mathcal{N}$ be a nest on $\mathcal{X}$, with each $N\in \mathcal{N} $ is complemented in $\mathcal{X}$ whenever $N_{-}=N$. Then every ternary derivation on $\al$ is an inner ternary derivation.
\end{cor}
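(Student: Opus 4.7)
My plan is to reduce the corollary to Theorem \ref{main} essentially mechanically. Given any ternary derivation $(\gamma, \delta, \tau)$ of $\al$, the defining identity immediately forces the pair $\delta, \tau$ to satisfy condition $(Z)$: whenever $AB = 0$ in $\al$, we have $\delta(A)B + A\tau(B) = \gamma(AB) = 0$. So Theorem \ref{main} applies and produces operators $R, T \in \al$ with $\gamma(A) = RA + AT$ for every $A \in \al$.

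Next I would substitute this expression back into $\gamma(AB) = \delta(A)B + A\tau(B)$ and rearrange to get the identity
\[ (\delta(A) - RA)\,B \;=\; A\,(BT - \tau(B)) \qquad (A,B \in \al).\]
Here I exploit that $\al$ is unital, as the identity operator $I_\mathcal{X}$ on $\mathcal{X}$ belongs to every nest algebra. Setting $B = I_\mathcal{X}$ in the displayed identity yields $\delta(A) - RA = AC$, and setting $A = I_\mathcal{X}$ yields $BT - \tau(B) = CB$, where $C := \delta(I_\mathcal{X}) - R$ is a single fixed element of $\al$.

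Putting the three formulas side by side, I then read off
\[ \gamma = L_R + R_T, \qquad \delta = L_R + R_C, \qquad \tau = -L_C + R_T, \]
which exhibits $(\gamma, \delta, \tau)$ as the inner ternary derivation associated with the choice $a = R$, $b = T$, $c = C$ in the definition recalled in the introduction. Uniqueness of $\gamma$ in Theorem \ref{main} is not used in this deduction; the only nontrivial ingredient is Theorem \ref{main} itself. Since that is the heavy lifting, I do not anticipate any real obstacle in the corollary --- the one point that needs to be noted explicitly is that $\al$ contains an identity, which guarantees that $R$, $T$ and $C$ all lie in $\al$ rather than merely in $\bo$.
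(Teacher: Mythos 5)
Your proposal is correct and follows essentially the same route as the paper: invoke Theorem~\ref{main} to write $\gamma(A)=RA+AT$, then specialize $A=I$ and $B=I$ in the ternary derivation identity to obtain $\delta=L_R+R_S$ and $\tau=-L_S+R_T$ with $S=\delta(I)-R=T-\tau(I)$ (your $C$), the consistency of these two expressions for $S$ being the one-line check you leave implicit. No substantive difference from the paper's argument.
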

We observe that the nests on Hilbert spaces, finite nests and the nests having order-type $ \omega+1$ or $1+\omega^{*}$, where $\omega$ is the order-type of the natural numbers, satisfy the condition in Theorem~\ref{main} and Corollary~\ref{inner} automatically.
\par 
As a corollary of our main results we characterize linear maps $\delta,\tau$ on $\al$ satisfying $(Z)$. We also present various applications of Theorem~\ref{main} for determining (right or left) centralizers and derivations through zero products, local right (left) centralizers, right (left) ideal preserving maps and local derivations on nest algebras.
\par 
This article is organized as follows: In Section 2 we provide the definition of nest algebra and some of the required results. In Section 3, the applications of Theorem~\ref{main} are presented. Section 4 is devoted to the proof of Theorem~\ref{main} and Corollary~\ref{inner}.
% ------------------------------------------------------------------------_________________________________
%%_____________________________________________________________________________________
\section{preliminaries and tools}
Let $\mathcal{X}$ be a (real or complex) Banach space, let $\bo$ be the Banach algebra of all bounded linear operators on $\mathcal{X}$, and let $\textbf{F}(\mathcal{X})$ be the ideal of all finite rank operators in $\bo$. A \textit{nest} $\mathcal{N}$ on $\mathcal{X}$ is a chain of closed (under norm topology) subspaces of $\mathcal{X}$ with $\lbrace 0 \rbrace$ and $\mathcal{X}$ in $\mathcal{N}$ such that for every family $\lbrace N_{i}\rbrace$ of elements of $\mathcal{N}$, both $\bigcap N_{i}$ and $\bigvee N_{i}$ (closed linear span of $\lbrace N_{i} \rbrace$) belong to $\mathcal{N}$. The \textit{nest algebra} associated to the nest $\mathcal{N}$, denoted by $\al$, is the weak closed operator algebra of all operators in $\bo$ that leave members of $\mathcal{N}$ invariant. We say that $\mathcal{N}$ is \textit{non-trivial} whenever $\mathcal{N} \neq \lbrace \lbrace 0\rbrace, \mathcal{X}\rbrace$. The ideal $\al \bigcap \textbf{F}(\mathcal{X})$ of all finite rank operators in $\al$ is denoted by $\alf$ and for $N\in \mathcal{N}$,
\[ N_{-}:=\bigvee \lbrace M\in \mathcal{N} \, \vert \, M\subset N \rbrace.\]
The identity element of algebras denotes by $I$ and an element $P$ in an algebra is called an idempotent if $P^{2}=P$. In order to prove our results we need the following results.
\begin{lem}$($\cite[Lemma 3.2]{hou}$)$\label{p1}
Let $\mathcal{N}$ be a nest on a Banach space $\mathcal{X}$. If $N\in \mathcal{N}$ is complemented in $\mathcal{X}$ whenever $N_{-}=N$, then the ideal $\alf$ of finite rank operators of $\al$ is contained in the linear span of the idempotents in $\al$. 
\end{lem}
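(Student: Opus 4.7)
The plan is to reduce the claim to rank-one operators in $\al$ via the classical decomposition of finite rank operators in a nest algebra, then to express each such rank-one as a difference of two idempotents, invoking the complementation hypothesis at exactly one of three cases.

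Recall the classical result that every $F \in \alf$ is a finite sum of rank-one operators each individually in $\al$, so it suffices to treat a single $x \otimes f \in \al$. Using the standard characterization, $x \otimes f \in \al$ iff there exists $N \in \mathcal{N}$ with $x \in N$ and $f \in N_-^{\perp}$; I fix the smallest such $N$, namely $N_x := \bigcap\{N' \in \mathcal{N} : x \in N'\}$, so that automatically $f \in (N_x)_-^{\perp}$.

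The argument then splits into three cases. (i) If $f(x) \neq 0$, then $(f(x))^{-1}(x \otimes f)$ is already an idempotent of $\al$. (ii) If $f(x) = 0$ and $x \notin (N_x)_-$, the Hahn--Banach theorem applied to the closed subspace $(N_x)_-$ and the point $x$ supplies $g \in (N_x)_-^{\perp}$ with $g(x) = 1$. Both $g$ and $g - f$ lie in $(N_x)_-^{\perp}$ (since $f$ does too), so both $x \otimes g$ and $x \otimes (g - f)$ lie in $\al$, and both are idempotents since $g(x) = (g - f)(x) = 1$; hence $x \otimes f = x \otimes g - x \otimes (g - f)$ is a difference of two idempotents of $\al$. (iii) If $f(x) = 0$ and $x \in (N_x)_-$, then $(N_x)_-$ is itself a nest element containing $x$, and the minimality of $N_x$ forces $N_x = (N_x)_-$. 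The complementation hypothesis then yields $\mathcal{X} = N_x \oplus K$ for some closed $K$; the projection $P$ onto $N_x$ along $K$ lies in $\al$ because $\mathcal{N}$ is a chain, so each $N' \in \mathcal{N}$ satisfies $N' \subseteq N_x$ or $N' \supseteq N_x$, and $P(N') \subseteq N'$ in both sub-cases. A short computation using $Px = x$ and $f \circ P = 0$ (valid since $f \in N_x^{\perp}$ when $N_x = (N_x)_-$) gives $(P + x \otimes f)^2 = P + x \otimes f$, so $P + x \otimes f$ is an idempotent of $\al$ and $x \otimes f = (P + x \otimes f) - P$ is once again a difference of two idempotents.

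The main obstacle is case (iii): recognizing that the equality $N_x = (N_x)_-$ is forced precisely when the Hahn--Banach construction of case (ii) is unavailable, and that this is exactly the condition under which the hypothesis delivers a complement. Once the correct nest element $N_x$ is identified as the one to which the hypothesis applies, the remainder is algebraic manipulation together with the standard rank-one characterization for nest algebras.
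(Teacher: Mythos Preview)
The paper does not supply its own proof of this lemma; it is quoted verbatim from Hou--Zhang \cite[Lemma~3.2]{hou}. Your argument is correct and is essentially the standard one that appears there: reduce to rank-one operators via the Ringrose--Erd\H{o}s decomposition of finite-rank elements of a nest algebra, and then split according to whether $f(x)\neq 0$, or $f(x)=0$ with $x\notin (N_x)_-$ (Hahn--Banach manufactures the required idempotent), or $f(x)=0$ with $x\in (N_x)_-$ (forcing $(N_x)_-=N_x$, at which point the complementation hypothesis furnishes a bounded projection $P\in\al$ and $P+x\otimes f$ is idempotent). All verifications you sketch---in particular $f\circ P=0$ in case~(iii) because $f\in N_x^{\perp}$ once $(N_x)_-=N_x$---are sound.
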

%%_______________________________________________________%______________
\begin{lem}\label{p2}
Let $\mathcal{N}$ be a nest on a Banach space $\mathcal{X}$, and $N\in \mathcal{N}$ be complemented in $\mathcal{X}$ whenever $N_{-}=N$. Then 
\begin{itemize}
\item[(i)] 
$\big\lbrace T\in \bo \, \vert \, TF=0 \, \text{for all} \,\, F\in \alf \rbrace =\lbrace 0 \big\rbrace$,
\item[(ii)] $\big\lbrace T\in \bo \, \vert \, FT=0 \, \text{for all} \,\, F\in \alf \rbrace =\lbrace 0 \big\rbrace$.
\end{itemize}
\end{lem}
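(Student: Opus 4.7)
My plan rests on the classical characterization of rank-one operators in a nest algebra: for $x\in\mathcal{X}$ and $f\in\mathcal{X}^{*}$, the operator $x\otimes f$ defined by $(x\otimes f)(y)=f(y)x$ belongs to $\al$ if and only if there is some $N\in\mathcal{N}$ with $x\in N$ and $f\in N_{-}^{\perp}$; such operators automatically lie in $\alf$. I will use these as test operators, together with a case analysis on the minimal nest element containing a chosen vector. (The nest is closed under arbitrary intersections, so for any $v\in\mathcal{X}$ the element $\bigcap\{N\in\mathcal{N}:v\in N\}$ belongs to $\mathcal{N}$.)

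For part (i), assume $TF=0$ for every $F\in\alf$ and fix $x\in\mathcal{X}$. Let $N_{0}:=\bigcap\{N\in\mathcal{N}:x\in N\}\in\mathcal{N}$. If $(N_{0})_{-}\neq\mathcal{X}$, Hahn--Banach yields a nonzero $f\in (N_{0})_{-}^{\perp}$; then $x\otimes f\in\al$, and evaluating $T(x\otimes f)y=f(y)Tx=0$ at some $y$ with $f(y)\neq 0$ forces $Tx=0$. The remaining possibility is $(N_{0})_{-}=\mathcal{X}$, which forces $N_{0}=\mathcal{X}$ and $\mathcal{X}_{-}=\mathcal{X}$; the identity $\mathcal{X}=\bigvee\{N\in\mathcal{N}:N\subsetneq\mathcal{X}\}$ then expresses $\mathcal{X}$ as the closed join of its proper elements, so $x$ is a norm-limit of vectors $x_{n}$ lying in proper nest elements $N_{n}\subsetneq\mathcal{X}$. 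Each $x_{n}$ falls into the first case (its minimal nest element is contained in $N_{n}\subsetneq\mathcal{X}$), hence $Tx_{n}=0$, and continuity of $T$ gives $Tx=0$.

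For part (ii), assume $FT=0$ for every $F\in\alf$ and, toward a contradiction, pick $y$ with $Ty\neq 0$. Set $N^{*}:=\bigcap\{N\in\mathcal{N}:Ty\in N\}\in\mathcal{N}$, so $N^{*}\neq\{0\}$. If $(N^{*})_{-}\neq N^{*}$, then $Ty\notin (N^{*})_{-}$; Hahn--Banach supplies $f\in (N^{*})_{-}^{\perp}$ with $f(Ty)\neq 0$, and any nonzero $x\in N^{*}$ yields $x\otimes f\in\al$ with $(x\otimes f)(Ty)=f(Ty)x\neq 0$, contradicting $FT=0$. If instead $(N^{*})_{-}=N^{*}$, the identity $N^{*}=\bigvee\{M\in\mathcal{N}:M\subsetneq N^{*}\}$ together with $N^{*}\neq\{0\}$ forces the existence of some $M\in\mathcal{N}$ with $\{0\}\neq M\subsetneq N^{*}$; minimality of $N^{*}$ gives $Ty\notin M\supseteq M_{-}$, and taking a nonzero $x\in M$ together with $f\in M_{-}^{\perp}$ satisfying $f(Ty)\neq 0$ produces the same contradiction.

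The main technical hurdle is the ``continuous top'' case $\mathcal{X}_{-}=\mathcal{X}$ in part (i), handled by the approximation argument above, which exploits the defining property of $\mathcal{X}_{-}$ as a closed join of proper nest elements together with norm continuity of $T$. By contrast, the symmetric situation $(N^{*})_{-}=N^{*}$ in part (ii) avoids approximation: one passes directly to a strictly smaller nonzero nest element $M$, whose existence is guaranteed precisely because $(N^{*})_{-}$ is the join of the proper sub-elements of $N^{*}$ and equals the nonzero space $N^{*}$.
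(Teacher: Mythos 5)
Your proof is correct, but it follows a genuinely different route from the paper's. The paper disposes of both parts in two lines by citing Spanoudakis' theorem that $\overline{\alf}^{\,SOT}=\al$: it takes a net $(F_{\gamma})$ in $\alf$ converging strongly to $I$ and uses the separate SOT-continuity of multiplication to get $TF_{\gamma}\to T$ (resp. $F_{\gamma}T\to T$), whence $T=0$. You instead build explicit rank-one test operators $x\otimes f\in\alf$ from the order-theoretic characterization of rank-one operators in a nest algebra together with Hahn--Banach, splitting into cases according to whether the minimal nest element containing the relevant vector has an immediate predecessor; the only analytic input is the norm-density underlying $\bigvee$ in the ``continuous'' case of part (i). What each approach buys: the paper's argument is essentially immediate once the density theorem is granted, while yours is self-contained, avoids any appeal to SOT-density of the finite ranks, and in fact never invokes the standing hypothesis that $N$ is complemented whenever $N_{-}=N$ --- so it establishes the lemma for arbitrary nests on Banach spaces, a slightly stronger statement. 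All the individual steps check out (the intersection defining the minimal nest element lies in $\mathcal{N}$ by definition of a nest, a finite linear combination of vectors from proper nest elements lies in the largest of them since the nest is a chain, and the minimality arguments in part (ii) correctly rule out $Ty$ lying in the smaller elements).
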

\begin{proof}
(i) Suppose that $T\in \bo$ and $TF=0$ for all $F\in \alf$. By \cite{sp} we have $\overline{\alf}^{SOT}=\al$. Thus there exists a net $(F_{\gamma})_{\gamma \in \Gamma}$ in $\alf$ converges to the identity operator $I$ with respect to the strong operator topology. Since the product of $\bo$ is separately SOT-continuous, it follows that $TF_{\gamma}\longrightarrow T$ in the strong operator topology. So $T=0$.
\par 
(ii) The proof is obtained by using a similar argument as in (i).
\end{proof}
Let $\mathcal{N}$ be a nest on the Banach space $\mathcal{X}$. If $\mathcal{X}$ is a Hilbert space or $\mathcal{N}$ is a finite nest or a nest having order-type $ \omega+1$ or $1+\omega^{*}$, where $\omega$ is the order-type of the natural numbers, then it is obvious that $N\in \mathcal{N}$ is complemented in $\mathcal{X}$ whenever $N_{-}=N$.
%%___________________________________________________________________________
%%_________________________________________________________________________________
\section{Applications}
In this section, we present applications of Theorem~\ref{main}. From this point up to the last section $ \mathcal{X} $ is a (real or complex) Banach space, $dim \mathcal{X} \geq 2$, and $\mathcal{N}$ is a nest on $\mathcal{X}$, with each $N \in \mathcal{N}$ is complemented in $\mathcal{X}$ whenever $N_{-}=N$.
%%********************************************************************************************************
\subsection*{Characterizing linear maps $\delta, \tau$ satisfying $(Z)$}
Determining the structure of linear maps satisfying $(Z)$ is a matter of interest, as explained in the introduction. In the following corollary we characterize linear maps $\delta,\tau$ on $\al$ satisfying $(Z)$.
%%______________________________________________________________________________
\begin{cor}\label{charac}
Let $ \delta, \tau: \al \rightarrow \al $ be linear maps. Then $\delta, \tau$ satisfy $(Z)$ if and only if there exist $R,S,T\in \al$ such that $\delta(A)=RA+AS$ and $\tau(A)=-SA+AT$ for all $A\in \al$.
\end{cor}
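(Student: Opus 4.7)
The plan is to derive this corollary as a straightforward application of Theorem~\ref{main}, exploiting the fact that $\al$ is unital (the identity operator $I$ preserves every subspace in $\mathcal{N}$).

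For the easy ``if'' direction, I would take $\delta(A)=RA+AS$ and $\tau(A)=-SA+AT$ and directly verify $(Z)$: for any $A,B\in\al$ with $AB=0$,
\[
\delta(A)B+A\tau(B)=(RA+AS)B+A(-SB+BT)=R(AB)+ASB-ASB+(AB)T=0.
\]

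For the ``only if'' direction, I would invoke Theorem~\ref{main} to obtain $R,T\in\al$ such that the linear map $\gamma(A)=RA+AT$ makes $(\gamma,\delta,\tau)$ a ternary derivation, i.e.,
\[
RAB+ABT=\delta(A)B+A\tau(B)\qquad (A,B\in\al).
\]
Setting $B=I$ yields $\delta(A)=RA+A(T-\tau(I))$, and setting $A=I$ yields $\tau(B)=(R-\delta(I))B+BT$. Writing $S:=T-\tau(I)$ then gives $\delta(A)=RA+AS$, and it remains to check that the ``$S$'' produced from the $A=I$ substitution coincides with this one, i.e., that $\delta(I)-R=-S$, equivalently $R+T=\delta(I)+\tau(I)$. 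This consistency drops out of the $A=B=I$ case of the ternary derivation identity, since $\gamma(I)=R+T$ on one side and $\delta(I)+\tau(I)$ on the other. With this, $\tau(B)=-SB+BT$ as required.

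There is essentially no obstacle: the whole content has been packed into Theorem~\ref{main}, and the corollary is just a two-substitution extraction. The only mild subtlety is bookkeeping the fact that the element $S$ obtained from the two substitutions agrees, which is exactly the information encoded by evaluating the ternary derivation relation at the identity.
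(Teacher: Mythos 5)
Your proposal is correct and follows essentially the same route as the paper: the paper deduces the corollary from Theorem~\ref{main} together with Corollary~\ref{inner}, and the proof of Corollary~\ref{inner} is precisely your two-substitution extraction (setting $A=I$ and $B=I$ in the ternary derivation identity and using $R+T=\delta(I)+\tau(I)$ to match the two candidates for $S$). You have merely inlined that citation, so there is no substantive difference.
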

\begin{proof}
Suppose that $\delta, \tau$ satisfy $(Z)$. By Theorem~\ref{main} there exists a linear map $\gamma: \al \rightarrow \al$ such that $(\gamma, \delta, \tau)\in Tder(\al)$ and by Corollary~\ref{inner}, $(\gamma, \delta, \tau)$ is an inner ternary derivation. So there exist $R,S,T\in \al$ such that $\delta(A)=RA+AS$ and $\tau(A)=-SA+AT$ for all $A\in \al$. The converse is clear.
\end{proof}
%%_________________________________________________________________________
If in $(Z)$ we assume that $\tau=\delta$, then $\delta$ is like derivation at zero product elements. Determining the structure of linear maps behaving like derivations at zero product elements is a topic of interest that has been studied extensively. See for instance \cite{barar, ben, bre1, gh1, gh2, lee, qi} and the references therein. From Theorem~\ref{main}, one gets the following corollary. %%%_________________________________________________________
\begin{cor} \label{der}
Assume that $ \delta : \al \to \al$ is a linear map. Then $\delta$ satisfying
\[ A B = 0 \Longrightarrow A \delta ( B) + \delta ( A ) B = 0 \quad (A, B \in \al) .  \]
if and only if there exist $ S,T \in \al $ such that $ \delta (A) = A T   - S A  $ for all $ A \in \mathcal{A} $ and $ T - S \in Z ( \al) $.  
\end{cor}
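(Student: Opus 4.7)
The plan is to derive this corollary directly from Corollary~\ref{charac} by specializing to $\tau = \delta$. The hypothesis on $\delta$ is precisely condition $(Z)$ in this special case, so Corollary~\ref{charac} furnishes elements $R, S, T \in \al$ such that $\delta(A) = RA + AS$ and simultaneously $\delta(A) = -SA + AT$ for every $A \in \al$. Equating the two representations yields $(R+S)A = A(T-S)$ for all $A$.

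Next, I would exploit the fact that $\al$ contains the identity operator $I$. Evaluating the identity $(R+S)A = A(T-S)$ at $A = I$ gives $R + S = T - S$, and substituting this back transforms the identity into $(T-S)A = A(T-S)$ for all $A \in \al$, which is exactly the statement that $T - S \in Z(\al)$. Reading off the second representation then presents $\delta$ in the desired form $\delta(A) = AT - SA$.

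For the converse, I would verify by a direct expansion that if $T - S$ is central and $AB = 0$, then $A\delta(B) + \delta(A)B$ collapses to $A(T-S)B = (T-S)AB = 0$, which is essentially a one-line check.

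There is no substantive obstacle: the work has already been done in Theorem~\ref{main} and distilled in Corollary~\ref{charac}, and what remains is only the algebraic manipulation that folds the two representations of $\delta$ into a single one and recognizes $T-S$ as a central element. The only mild point worth noting is the use of $I \in \al$, which is automatic for nest algebras.
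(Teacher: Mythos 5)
Your proposal is correct and follows essentially the same route as the paper: specialize Corollary~\ref{charac} to $\tau=\delta$, equate the two representations to get $(R+S)A = A(T-S)$, evaluate at $A=I$ to conclude $T-S\in Z(\al)$, and check the converse directly. No gaps.
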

\begin{proof}
By Corollary~\ref{charac}, there exist $ R , S , T \in B ( \mathcal{X} ) $ such that
\[ \delta ( A)=RA+AS = -SA+AT\]
for all $ A \in \mathcal{A} $. Hence, $ A ( T-S ) = ( R + S ) A $ for all $ A \in \al $. Let $ A = I $, we see that $ T-S = R + S $. Therefore, $ T-S \in Z(\al) $. The converse is clear.
\end{proof}
The above corollary is obtained in some previous articles. For example, this conclusion can be drawn from \cite[Proposition 2.3]{ghJor} and \cite[Theorem 4.1]{ghJor}. So Corollary~\ref{charac} is a generalization of this result.
%%**********************************************************************************************
\subsection*{Characterizing (right or left) centralizers through zero products}
Let $\mathcal{A}$ be an algebra. A linear map $\rho : \mathcal{A} \rightarrow \mathcal{A}$ is called a \textit{right (left) centralizer} if $\rho(ab) = a\rho(b)$ ($\rho(ab) = \rho(a)b$) for each $a,b \in \mathcal{A}$ and $\rho$ is called a \textit{centralizer} if it is both a left centralizer and a right centralizer. One of the issues to consider is to describe the structure of linear maps that act as right (left) centralizers or centralizers at zero product elements as follows:
\begin{equation}\label{d2}
\begin{split}
& ab = 0 \Longrightarrow a \rho ( b) = 0 ,\\
& ab = 0 \Longrightarrow  \rho ( a)b = 0 , \\
& ab = 0 \Longrightarrow  a \rho ( b) =\rho ( a)b = 0 ,
\end{split}
\end{equation}
where $a,b \in \mathcal{A}$ and $\rho : \mathcal{A} \rightarrow \mathcal{َA}$ is a linear map. In \cite{bre1}, Bre$\check{\textrm{s}}$ar shows that if $\mathcal{A}$ is a prime ring and $\rho$ is an additive map on $\mathcal{A}$, then $\rho$ satisfying the second equation in \eqref{d2} if and only if $\rho$ is a left centralizer. This problem has been explored by several authors, (\cite{gh11, lli, li, qi, xu} among others). Now we consider this problem on nest algebras.
%%________________________________________________________________________
\begin{cor}\label{3main}
Assume that $ \tau, \delta, \rho : \al \to \al $ are linear maps.
\begin{itemize}
\item[(i)] $\tau$ satisfies 
\[ A B = 0 \Longrightarrow A \tau ( B) = 0  \quad (A, B \in \al)  \]
if and only if $ \tau ( A) = A D $ for all $ A\in \al $ in which $ D \in \al  $.
\item[(ii)] $\delta$ satisfies 
\[ A B = 0 \Longrightarrow  \delta ( A)B = 0 \quad (A, B \in \al)  \]
if and only if $ \delta ( A) = DA $ for all $ A\in \al $ in which $ D \in \al  $.
\item[(iii)] $\rho$ satisfies 
\[ A B = 0 \Longrightarrow  A \rho ( B) =\rho ( A)B = 0 \quad (A, B \in \al)  \]
if and only if $ \rho ( A) = AD $ for all $ A \in \al $ in which $ D \in \al $ and $AD=DA$ for all $ A\in \al $. 
\end{itemize}
\end{cor}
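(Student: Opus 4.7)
The plan is to derive all three statements as direct consequences of Corollary~\ref{charac}, by specializing one of the two maps in the pair $(\delta,\tau)$ to be zero, and then exploiting the resulting algebraic constraints to force the middle operator $S$ into the center of $\al$.

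For part (i), I would set $\delta=0$ and observe that the hypothesis on $\tau$ is precisely the statement that the pair $(\delta,\tau)=(0,\tau)$ satisfies $(Z)$. Applying Corollary~\ref{charac} gives $R,S,T\in\al$ with $0=RA+AS$ and $\tau(A)=-SA+AT$ for every $A\in\al$. Substituting $A=I$ into the first equation yields $R=-S$, so the first equation becomes $AS-SA=0$ for all $A\in\al$; hence $S\in Z(\al)$. Then $-SA=-AS$, and $\tau(A)=AT-AS=A(T-S)$, so the desired constant is $D:=T-S\in\al$. The converse is immediate since $AB=0$ forces $A\tau(B)=ABD=0$. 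Part (ii) is completely symmetric: set $\tau=0$, extract $R,S,T$ from Corollary~\ref{charac}, use $A=I$ in $-SA+AT=0$ to get $T=S$, deduce $S\in Z(\al)$, and conclude $\delta(A)=RA+AS=(R+S)A=DA$ with $D:=R+S$.

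For part (iii), the hypothesis implies that $\rho$ satisfies both conditions (i) and (ii). Applying them in turn produces $D_1,D_2\in\al$ with $\rho(A)=AD_1=D_2A$ for every $A\in\al$. Evaluating at $A=I$ gives $D_1=D_2=:D$, and then $AD=DA$ for all $A\in\al$ is immediate. Again the converse is clear.

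I do not anticipate a genuine obstacle: once Corollary~\ref{charac} is in hand, the only content of the argument is recognizing that setting $\delta=0$ (or $\tau=0$) collapses the general ``almost derivation'' decomposition into a pure one-sided multiplication, and that the gluing operator $S$ must then be central. The slightly subtle point, which I would make explicit, is the step that turns $RA+AS=0$ for all $A$ into $S\in Z(\al)$: it uses only the existence of the identity $I\in\al$ (valid because $\al$ contains the projection onto $\mathcal{X}\in\mathcal{N}$, namely $I$ itself), so no deeper nest-algebra machinery beyond what was already invoked for Theorem~\ref{main} is needed here.
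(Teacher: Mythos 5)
Your proof is correct, and all three parts go through as you describe. The route differs slightly from the paper's: where you invoke Corollary~\ref{charac} (which itself rests on both Theorem~\ref{main} and Corollary~\ref{inner}) and then do the algebra of forcing $R=-S$ and $S\in Z(\al)$ to collapse $\tau(A)=-SA+AT$ into $A(T-S)$, the paper works one level lower. It sets $\delta=0$, applies Theorem~\ref{main} alone to get a $\gamma$ with $\gamma(AB)=A\tau(B)$, puts $A=I$ to conclude $\gamma=\tau$, so that $\tau(AB)=A\tau(B)$ exhibits $\tau$ as a right centralizer, and then $B=I$... rather $A\mapsto A$, $B=I$ gives $\tau(A)=A\tau(I)$ directly, with $D=\tau(I)$. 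The paper's version is marginally more economical (it never needs the innerness statement, only the existence and form of $\gamma$) and identifies $D$ explicitly as $\tau(I)$; yours trades that for a purely mechanical deduction from the already-stated structural decomposition, which is equally legitimate since Corollary~\ref{charac} precedes this corollary in the paper. Your treatment of (iii) is in fact more explicit than the paper's, which only says it is clear from (i) and (ii); your evaluation at $A=I$ to force $D_1=D_2$ is exactly the missing detail.
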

\begin{proof}
(i) Let $A \tau ( B) = 0$ whenever $AB=0$. Set $\delta =0$. Then $\delta , \tau$ satisfy $(Z)$. By Theorem~\ref{main} there exists a linear map $\gamma: \al \rightarrow \al$ such that $(\gamma, \delta, \tau)\in Tder(\al)$. So $\gamma(AB)=A\tau(B)$ for all $A,B\in \al$. Hence $\gamma=\tau$ and $\tau(AB)=A\tau(B)$ for all $A,B\in \al$. By setting $D=\tau(I)\in \al$ we see that $ \tau ( A) = A D $ for all $ A\in \al $. The converse is clear.
\par
(ii) The proof is obtained by using a similar argument as in (i).
\par
(iii) It is clear from (i) and (ii).
\end{proof}
%%**********************************************************************************************
\subsection*{Local right (left) centralizers and right (left) ideal preserving maps}
A linear map $\psi$ on an algebra $\mathcal{A}$ is called a \textit{local right (left) centralizer} if for any $a\in \A$ there exists a right (left) centralizer $\rho_{a}:\A\rightarrow \A$ (depending on $a$) such that $\psi(a)=\rho_{a}(a)$. Clearly, each right (left) centralizer is a local right (left) centralizer. The converse is, in general, not true. We say that a linear map $\psi:\A\rightarrow \A$ is \textit{right (left) ideal preserving} if $\psi(\mathcal{J})\subseteq \mathcal{J}$ for any right (left) ideal $\mathcal{J}$ of $\A$. Suppose that $\A$ is a unital algebra. It is then easily verified that the linear map $\psi:\A\rightarrow \A$ is right (left) ideal preserving if and only if $\psi$ is a local right (left) centralizer. So it is clear that any right (left) centralizer is a right (left) ideal preserving map, but the converse is not necessarily true. It is natural and interesting to ask for what algebras any local right (left) centralizer or any right (left) ideal preserving map is a right (left) centralizer. Johnson \cite{john} has proven that if $\A$ is a semisimple Banach algebra with an approximate identity and $\psi$ is a bounded operator on $\A$ that leaves invariant all closed left ideals of $\A$, then $\psi$ is a left centralizer of $\A$. Hadwin and Li \cite{had2} have shown that Johnson's Theorem holds for all CSL algebras. In particular Hadwin, Li and their collaborators \cite{had3, had, had4, li} have studied the problems of this kind in the past twenty years for various reflexive operator algebras. This problem has also been investigated for other algebras. We refer to \cite{gh4, ka} and their references.
%%__________________________________________________________________
In the next corollary we characterize the local right centralizers and local left centralizers of the nest algebras. 
\begin{cor} \label{4main}
Let $\psi:\al\rightarrow \al$ be a linear map.
\begin{itemize}
\item[(i)] $\psi$ is a local right centralizer if and only if $\psi$ is a right centralizer.
\item[(ii)] $\psi$ is a local left centralizer if and only if $\psi$ is a left centralizer.
\end{itemize}
\end{cor}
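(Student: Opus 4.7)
The plan is to reduce both parts to the zero-product characterizations already established in Corollary~\ref{3main}. The converse directions are immediate, since any right (left) centralizer $\psi$ is tautologically a local right (left) centralizer (take $\rho_a=\psi$ for every $a$). The content is in the forward directions.

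For part (i), first I would show that any local right centralizer $\psi:\al\to\al$ satisfies the hypothesis of Corollary~\ref{3main}(i), namely $AB=0\Longrightarrow A\psi(B)=0$. Indeed, given $B\in\al$, by definition there exists a right centralizer $\rho_{B}:\al\to\al$ (depending on $B$) with $\psi(B)=\rho_{B}(B)$. If $A,B\in\al$ satisfy $AB=0$, then using the right centralizer identity $\rho_{B}(XY)=X\rho_{B}(Y)$ with $X=A$, $Y=B$, we get
\[
A\psi(B)=A\rho_{B}(B)=\rho_{B}(AB)=\rho_{B}(0)=0.
\]
Then Corollary~\ref{3main}(i) yields $D\in\al$ with $\psi(A)=AD$ for all $A\in\al$, and such a map is clearly a right centralizer of $\al$.

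Part (ii) is handled by the symmetric argument. If $\psi$ is a local left centralizer, then for any $A\in\al$ pick a left centralizer $\rho_{A}$ with $\psi(A)=\rho_{A}(A)$. Whenever $AB=0$, applying $\rho_{A}(XY)=\rho_{A}(X)Y$ gives $\psi(A)B=\rho_{A}(A)B=\rho_{A}(AB)=0$. Corollary~\ref{3main}(ii) then provides $D\in\al$ with $\psi(A)=DA$ for all $A\in\al$, which is a left centralizer.

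There is no real obstacle here; the entire weight of the argument rests on Corollary~\ref{3main}, which was already derived from Theorem~\ref{main}. The only point worth flagging is the dependence of $\rho_{B}$ on $B$: what makes the reduction work is that once $B$ is fixed one may substitute any $A$ into the centralizer identity for $\rho_{B}$, so the $B$-dependent choice of centralizer is harmless for deducing the zero-product condition.
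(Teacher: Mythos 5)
Your proposal is correct and follows essentially the same route as the paper: verify the zero-product condition $AB=0\Rightarrow A\psi(B)=0$ (resp.\ $\psi(A)B=0$) and invoke Corollary~\ref{3main}. The only cosmetic difference is that the paper first writes $\psi(A)=AD_A$ with $D_A=\rho_A(I)$ before checking the condition, whereas you apply the centralizer identity for $\rho_B$ directly; both are the same argument.
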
 
\begin{proof}
(i) Suppose that $\psi$ is a local right centralizer. Therefore, for any $A\in\al$, there is an element $D_{A}\in\al$ such that $\psi(A)=AD_A$. So for $A,B \in \al$ with $AB=0$, we have 
\[A\psi(B)=ABD_B=0.\]
From Corollary~\ref{3main}(i), it follows that $\psi$ is a a right centralizer. The converse is clear.
\par 
(ii) By using Corollary~\ref{3main}(ii) and a similar proof as (i), we obtain the desired conclusion.
\end{proof}
%__________________________________________________________________
The right ideal preserving linear maps and left ideal preserving linear maps on nest algebras are described in the following corollary.
\begin{cor} \label{5main}
Let $\psi:\al\rightarrow \al$ be a linear map.
\begin{itemize}
\item[(i)] $\psi$ is a right ideal preserving map if and only if $\psi$ is a right centralizer.
\item[(ii)] $\psi$ is a left ideal preserving map if and only if $\psi$ is a left centralizer.
\end{itemize}
\end{cor}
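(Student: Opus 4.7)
The plan is to reduce the statement to Corollary~\ref{4main} via the equivalence (already noted in the discussion preceding Corollary~\ref{4main}) that on any unital algebra, a linear map is right (left) ideal preserving if and only if it is a local right (left) centralizer. Since $\al$ is unital with identity $I$, that equivalence applies here directly, so once it is established the corollary is immediate.

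For the forward direction of (i), assume $\psi$ is right ideal preserving. For an arbitrary $A\in\al$, the set $A\al=\{AX : X\in\al\}$ is a right ideal of $\al$, so $\psi(A)=\psi(AI)\in\psi(A\al)\subseteq A\al$. Thus there exists $D_A\in\al$ with $\psi(A)=AD_A$. Defining $\rho_A:\al\rightarrow\al$ by $\rho_A(X)=XD_A$ gives a right centralizer (since $\rho_A(XY)=XYD_A=X\rho_A(Y)$) that agrees with $\psi$ at $A$. Hence $\psi$ is a local right centralizer, and Corollary~\ref{4main}(i) yields that $\psi$ is a right centralizer.

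For the reverse direction of (i), if $\psi$ is a right centralizer then $\psi(A)=\psi(AI)=A\psi(I)$ for every $A\in\al$. For any right ideal $\mathcal{J}$ of $\al$ and any $A\in\mathcal{J}$, since $\mathcal{J}$ is closed under right multiplication we obtain $\psi(A)=A\psi(I)\in\mathcal{J}$, so $\psi(\mathcal{J})\subseteq\mathcal{J}$. Part (ii) is handled by the symmetric argument using the principal left ideal $\al A$ and Corollary~\ref{4main}(ii).

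There is no genuine obstacle here; the work was already done in Corollary~\ref{4main}, and this statement is essentially a rephrasing in ideal-theoretic language, with the only nontrivial ingredient being the use of $I\in\al$ to convert ideal-preservation into a pointwise factorization $\psi(A)=AD_A$.
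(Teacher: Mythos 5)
Your proof is correct and follows essentially the same route as the paper: use the principal right ideal $A\al$ and the identity $I$ to extract the factorization $\psi(A)=AD_A$, conclude that $\psi$ is a local right centralizer, and invoke Corollary~\ref{4main}. The only difference is that you spell out the ``routine verification'' of the converse ($\psi(A)=A\psi(I)\in\mathcal{J}$), which the paper leaves implicit.
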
 
\begin{proof}
(i) Assume that $\psi$ is a right ideal preserving map. Let $A\in \al$. It is clear that $A(\al)$ is a right ideal of $\al$. It follows from the hypothesis that $\psi(A(\al))\subseteq A(\al)$. By the fact that $\al$ is unital, there exists an element $D_A\in \al$ such that $\psi(A)=AD_A$. So $\psi$ is a local right centralizer and by Corollary~\ref{4main}(i), it is a right centralizer. Conversely, if $\psi$ is a right centralizer, a routine verification shows that $\psi$ is a right ideal preserving map.
\par 
(ii) By using Corollary~\ref{4main}(ii) and a similar proof as (i), we obtain the desired conclusion.
\end{proof}
%%**********************************************************************************************
\subsection*{Local (generalized) derivations}
A linear map $\delta$ on an algebra $\A$ is called a \textit{local (generalized) derivation} if for any $a\in \A$ there is a (generalized) derivation $\delta_{a}:\A\rightarrow \A$ (depending on $a$) such that $\delta(a)=\delta_{a}(a)$. There have been many papers in the literature investigating when local (generalized) derivations are (generalized) derivations, see \cite{bre1, de, had2, had4, ka, lli2, li, zha} and the references therein. In the following corollary, we characterize local generalized derivations on nest algebras.
\begin{cor}\label{localgd}
For any linear map $\delta:\al \rightarrow \al$,The following are equivalent:
\begin{itemize}
\item[(i)] $\delta$ is a generalized derivation;
\item[(ii)] $\delta$ is a local generalized derivation;
\item[(iii)] $A\delta(B)C=0$, whenever $A,B,C\in \al$ such that $AB=BC=0$.
\end{itemize}
\end{cor}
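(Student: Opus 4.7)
I plan to prove the cyclic implications $(i) \Rightarrow (ii) \Rightarrow (iii) \Rightarrow (i)$. The first is immediate by taking $\delta_a := \delta$ for every $a \in \al$. For $(ii) \Rightarrow (iii)$, fix $A, B, C \in \al$ with $AB = BC = 0$ and let $\delta_B$ be a generalized derivation with $\delta_B(B) = \delta(B)$. Writing $\delta_B$ in its ternary-derivation form $\delta_B(XY) = \delta_B(X)Y + X\tau_B(Y)$ with $\tau_B(Y) := \delta_B(Y) - \delta_B(I)Y$ and evaluating at $XY = BC = 0$ yields $\delta_B(B)C = -B\tau_B(C)$; left-multiplying by $A$ and using $AB = 0$ gives $A\delta(B)C = -AB\tau_B(C) = 0$.

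The main direction is $(iii) \Rightarrow (i)$, and the strategy is to reduce it to Theorem~\ref{main}. Define $\tau : \al \to \al$ by $\tau(B) := \delta(B) - \delta(I)B$; this is the unique choice for which, once $\delta$ is a generalized derivation, $(\delta, \delta, \tau)$ is a ternary derivation. Assuming the pair $(\delta, \tau)$ satisfies $(Z)$, Theorem~\ref{main} produces $R, T \in \al$ and $\gamma(A) := RA + AT$ such that $(\gamma, \delta, \tau) \in Tder(\al)$. Setting $A = I$ in $\gamma(AB) = \delta(A)B + A\tau(B)$ gives $\gamma(B) = \delta(I)B + \tau(B) = \delta(B)$, so $\gamma = \delta$ and $\delta(A) = RA + AT$. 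A direct expansion (using $\delta(I) = R + T$) confirms any such map is a generalized derivation.

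The main obstacle is verifying $(Z)$ for $(\delta, \tau)$, namely that $W := \delta(A)B + A\delta(B) - A\delta(I)B$ vanishes whenever $AB = 0$. Condition $(iii)$ supplies two immediate annihilator facts: for every $X \in \al$ with $BX = 0$, the expansion of $WX$ collapses to $A\delta(B)X$, which is killed by $(iii)$ applied to the triple $(A, B, X)$; dually, $YW = 0$ for every $Y \in \al$ with $YA = 0$. To upgrade these to $W = 0$, I would apply Lemma~\ref{p2} to reduce $W = 0$ to $WF = 0$ for all $F \in \alf$, then use Lemma~\ref{p1} to write each $F$ as a linear combination of idempotents in $\al$, and finally control each $WP$ for an idempotent $P \in \al$ by means of the Peirce decomposition of $\al$ with respect to $P$ together with further applications of $(iii)$ to the Peirce components of $A$, $B$, and $P$. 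This last passage from partial annihilation to global vanishing is the technical heart of the proof, and it is where the hypothesis that each $N \in \mathcal{N}$ with $N_- = N$ is complemented is genuinely used.
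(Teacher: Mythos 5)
Your implications (i)$\Rightarrow$(ii) and (ii)$\Rightarrow$(iii) are correct and complete. The problem is (iii)$\Rightarrow$(i): everything hinges on verifying condition $(Z)$ for the pair $(\delta,\tau)$ with $\tau(B)=\delta(B)-\delta(I)B$, i.e.\ that $W:=\delta(A)B+A\delta(B)-A\delta(I)B$ vanishes whenever $AB=0$, and this is exactly where the argument stops being a proof. The two ``annihilator facts'' you derive ($WX=0$ for all $X$ with $BX=0$, and $YW=0$ for all $Y$ with $YA=0$) are correct but can be completely vacuous: for $A,B\in\al$ with $AB=0$ the operator $B$ may be injective and $A$ may have dense range, in which case both annihilators are $\{0\}$ and you learn nothing about $W$. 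The proposed repair --- reduce to $WF=0$ via Lemma~\ref{p2}, span $F$ by idempotents via Lemma~\ref{p1}, and control $WP$ by applying (iii) to Peirce components of $A$, $B$, $P$ --- is not carried out, and it is not clear it can be: hypothesis (iii) needs triples $(A',B',C')$ with $A'B'=B'C'=0$, and the Peirce components of $A$ and $B$ relative to an arbitrary idempotent $P$ inherit no such zero-product relations from $AB=0$ (one only gets $APB+A(I-P)B=0$, with neither summand forced to vanish). Since establishing $(Z)$ for this pair is essentially equivalent to the corollary itself, the key direction has been deferred to a sketch rather than proved.

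For comparison, the paper does not argue directly at all: it observes that Corollary~\ref{3main} verifies the hypotheses of \cite[Proposition 1.1]{li} (a general statement for unital algebras on which the one-sided zero-product conditions force one-sided multiplications) and simply quotes that proposition. If you want a self-contained proof along your lines, the workable route is to rework the \emph{middle} variable of (iii) rather than the outer ones: use triples such as $\bigl(X(I-P),\,P,\,(I-P)Y\bigr)$ and $\bigl(XP,\,I-P,\,PY\bigr)$ together with Lemma~\ref{p2} to pin down $\delta$ on idempotents, extend to $\alf$ by Lemma~\ref{p1} and linearity, and then pass to all of $\al$ by another application of Lemma~\ref{p2}; only after that does the reduction to Theorem~\ref{main} (or the direct verification that $\delta(A)=RA+AT$) go through.
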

\begin{proof}
According to the Corollary~\ref{3main}, all the conditions of the \cite[Proposition 1.1]{li} are satisfied, so we obtain the desired result from \cite[Proposition 1.1]{li}.
\end{proof}
We have the following result for the local derivations.
\begin{cor}\label{locald}
Let  $\delta:\al \rightarrow \al$ be a linear map. Then $\delta$ is a local derivation if and only if $\delta$ is a derivation.
\end{cor}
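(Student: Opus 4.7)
The plan is to deduce Corollary~\ref{locald} directly from Corollary~\ref{localgd} together with the elementary fact that any derivation annihilates the identity. The converse direction is immediate, since every derivation $\delta$ trivially serves as a witness $\delta_A = \delta$ for every $A$, making it a local derivation.

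For the forward direction, suppose $\delta:\al\rightarrow \al$ is a local derivation. First I would observe that every derivation on a unital algebra is in particular a generalized derivation (taking the additional term $A\delta(I)B$ to be zero, since $\delta(I)=0$ for any derivation). Hence for each $A\in\al$, the derivation $\delta_A$ witnessing $\delta(A)=\delta_A(A)$ is also a generalized derivation, so $\delta$ is a local generalized derivation. Applying Corollary~\ref{localgd}, $\delta$ is a generalized derivation, which means
\[
\delta(AB) = \delta(A)B + A\delta(B) - A\delta(I)B \qquad (A,B\in \al).
\]

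It remains to show that $\delta(I)=0$, which will upgrade the generalized derivation identity to the derivation identity. This is the only substantive step: by the local derivation hypothesis applied at $A=I$, there is a derivation $\delta_I$ on $\al$ with $\delta(I)=\delta_I(I)$. Since $\delta_I$ is a derivation of a unital algebra, $\delta_I(I)=\delta_I(I\cdot I)=\delta_I(I)I + I\delta_I(I)=2\delta_I(I)$, so $\delta_I(I)=0$ and thus $\delta(I)=0$. Substituting this back gives $\delta(AB)=\delta(A)B+A\delta(B)$ for all $A,B\in\al$, so $\delta$ is a derivation.

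I do not anticipate any real obstacle here: the heavy lifting has already been done in Corollary~\ref{localgd} (and ultimately in Theorem~\ref{main}), and the step $\delta(I)=0$ uses only that the witness at the point $I$ is a derivation. The only thing to verify carefully is that a derivation does indeed count as a generalized derivation in the sense used in the paper, i.e., that $\gamma(ab)=\gamma(a)b+a\gamma(b)-a\gamma(1)b$ reduces to the derivation identity when $\gamma(1)=0$, which is immediate from $\gamma=\delta=\tau$ in the definition of ternary derivation given in the introduction.
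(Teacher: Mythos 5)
Your proposal is correct and follows essentially the same route as the paper: reduce to Corollary~\ref{localgd} by noting a local derivation is a local generalized derivation, then use the witness at $I$ to get $\delta(I)=0$ and upgrade the generalized derivation identity to the derivation identity. The paper's proof is just a terser version of exactly this argument.
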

\begin{proof}
If $\delta:\al \rightarrow \al$ is a local derivation, then it is a local generalized derivation and $\delta(I)=0$. By Corollary~\ref{localgd}, $\delta$ is a generalized derivation with $\delta(I)=0$. So $\delta$ is a derivation. The converse is clear.
\end{proof}
It should be noted that the condition on the nests in Corollary~\ref{localgd} is different from the conditions on the nests in \cite{de}. Corollary~\ref{localgd} also generalizes similar results for nest algebras on Hilbert spaces.
%%___________________________________________________________________________
%%_________________________________________________________________________________
\section{Proofs}
\textbf{Proof of Theorem~\ref{main}:} Only the 'only if' part needs to be checked. Let $\delta , \tau : \al \rightarrow \al$ be linear maps satisfy $(Z)$. Through the following steps we prove that there exits a linear map $\gamma : \al \rightarrow \al$ defined by $\gamma(A)=RA+AT$ for some $R,T \in \al$ such that $(\gamma, \delta, \tau)\in Tder(\al)$.
\\
\\
\textbf{Step 1.} For each $A\in \al$ and each idempotent element $P\in \al$, we have
\[ \delta ( A P )+ AP \tau (I) =A \tau ( P ) + \delta ( A) P\quad \text{and} \quad \tau ( P A ) + \delta ( I) P A = P \tau (A) + \delta ( P ) A . \]
 \par 
 Let $A, P\in \al$ where $P^2=P$. The operator $ I - P $ is an idempotent and $ AP(I-P) = 0 $. By assumption we have
 \[  \delta ( AP ) (I-P) +A P \tau ( I-P )  = 0 . \]
Hence
\[  \delta ( AP ) - \delta ( AP ) P+AP \tau (I) - AP \tau (P)  = 0 . \]
Therefore,
\begin{equation} \label{eq3}%(1)
 \delta ( AP )+AP \tau (I)  =   \delta (AP) P+AP \tau (P) .
\end{equation}
Since $ A(I-P) P = 0 $, it follows that
\[ \delta ( A(I-P) ) P+A (I-P)  \tau (P) = 0 .\]
So
\[\delta (A) P - \delta ( AP ) P+ A \tau (P) - A P \tau (P) = 0 . \]
Consequently 
\begin{equation} \label{eq4}%(2)
  \delta ( AP ) P+A P \tau (P) =\delta ( A)P+A \tau (P) . 
\end{equation}
By comparing \eqref{eq3} and \eqref{eq4}, we arrive at
\[  \delta  (AP) +AP \tau (I)= \delta (A) P+A \tau (P) . \]
Since $ P(I-P) A = 0 $ and $ (I-P) PA = 0$, by assumption we have 
\[ \delta (P) (I-P) A +P \tau ( (I-P) A )= 0 \quad \text{and} \quad  \delta ( I-P ) PA+ Q \tau (PA) = 0 . \]
From these equations we have the followings, respectively
\[   \delta (P) PA+P \tau ( PA )= \delta ( P ) A+P \tau ( A) \]
and
\[\delta (P) PA+P \tau (PA)=\delta (I) PA+\tau ( PA) . \]
Comparing these equations, we get
\[\delta (I) PA+ \tau ( PA) = \delta (P) A+P \tau (A) . \]
\\
%%-----------------------------------------------------------------------------------------------------------------------
\textbf{Step 2.} For each $A\in \al$ and $F\in \alf$ we have
\[ \delta ( A F )+ AF \tau (I) =A \tau ( F ) + \delta ( A) F\quad \text{and} \quad \tau ( F A ) + \delta ( I) F A = F \tau (A) + \delta ( F) A . \]
\par
By Step 1, Lemma~\ref{p1} and the fact that $\delta, \tau$ are linear we get the desired result.
\\ \\
%%-----------------------------------------------------------------------------------------------------------------------
\textbf{Step 3.} For all $ A,B \in \mathcal{A} $, we have
\[ \delta ( AB) = A \delta ( B) + \delta (A) B - A \delta (I) B . \]
\par
Taking $ A= I $ in Step 2, we find that 
\begin{equation} \label{eq5}
\delta (F) = \tau (F) - F \tau (I) + \delta (I ) F ,
\end{equation}
for all $ F \in \alf $. Since $  \alf $ is an ideal in $ \al $, it follows from \eqref{eq5} that
\[ \delta ( AF) = \tau ( AF ) - AF \tau (I) + \delta (I) AF  \]
for all $ A \in \al $ and $ F \in \alf $. From this equation and Step 2, we see that  
\begin{equation} \label{eq6}
\tau ( AF) = A \tau (F) + \delta (A) F - \delta (I) A F 
\end{equation}
for all $ A \in \al $ and $ F \in \alf$. From \eqref{eq6}, we get
\begin{equation} \label{eq7}
\tau ( AB F) = AB \tau (F) + \delta ( AB) F - \delta (I) AB F ,
\end{equation}
for all $ A , B \in \al $ and $ F \in \alf$. On the other hand, 
\begin{align} \label{eq8}
\tau ( ABF) & = A \tau (BF) + \delta (A) B F - \delta (I) AB F  \nonumber \\
& = AB \tau (F) + A \delta (B) F - A \delta (I) B F + \delta (A) B F - \delta (I) AB F 
\end{align}
for all $ A , B \in \al $ and $ F \in \alf $. By comparing \eqref{eq7} and \eqref{eq8}, we arrive at
\[ \delta (AB) F = A \delta (B) F + \delta (A) B F- A \delta (I) BF    \]
for all $ A , B \in \al$ and $ F \in \alf $. From Lemma~\ref{p2}(i), it follows that
\[ \delta ( AB ) = A \delta (B) + \delta (A) B - A \delta (I) B \]
for all $ A , B \in \al$.
\\ \\
%%-----------------------------------------------------------------------------------------------------------------------
\textbf{Step 4.} For all $ A,B \in \al $, we have
\[ \tau (AB) = A \tau (B) + \tau (A) B - A  \tau (I) B . \]
\par
From Step 2 and \eqref{eq5}, it follows that
\begin{align} \label{eq9}
\tau ( F A) & = F \tau (A) + \delta (F) A - \delta (I) F A \nonumber \\
& = F \tau ( A) + \tau (F) A - F \tau (I) A , 
\end{align}
for all $ A  \in \al $ and $ F \in \alf $. Now, by using \eqref{eq9} for all $ A , B \in \al $ and $ F \in \alf $, we calculate $ \tau ( F AB ) $ in two ways and we obtain the followings.
\[ \tau ( F AB ) = F \tau (AB) + \tau (F) AB - F \tau (I) AB \]
and
\[ \tau (FAB ) = FA \tau (B) + F \tau (A) B + \tau (F) AB - F \tau (I)  AB - F A \tau (I) B . \]
Comparing these equations, we arrive at
\[ F \tau (AB) = F A \tau (B) + F \tau (A) B - F A \tau (I) B  \]
for all $ A , B \in \al $ and $ F \in \alf $. From Lemma~\ref{p2}(ii), it follows that
\[ \tau ( AB ) = A \tau (B) + \tau (A) B - A \tau (I) B \]
for all $ A , B \in \al $ and $ F \in \alf$.
\\ \\
%%-----------------------------------------------------------------------------------------------------------------------
\textbf{Step 5.} For all $ A \in \mathcal{A} $, we have
\[ \tau ( A) - A \tau (I) = \delta (A) - \delta (I) A .\]
\par
From \eqref{eq5} and Step 3, it follows that
\begin{align*}
\tau ( AF ) - AF \tau (I) & = \delta (AF ) - \delta ( I ) A F \\
& = A \delta ( F) + \delta (A) F - A \delta (I) F - \delta (I) A F \\
& = A \tau (F) - AF \tau (I) + \delta (A) F - \delta(I) AF
\end{align*}
for all $ A  \in \al $ and $ F \in \alf $. On the other hand, according to the Step 4, for all $ A \in \al$ and $ F \in \alf $, we have
\[ \tau (AF) - A F \tau (I) = A \tau (F) + \tau ( A) F - A \tau (I) F - A F \tau (I) . \]
By comparing these equations, we find
\[ ( \delta (A) - \delta ( I ) A ) F = ( \tau (A) - A \tau (I) ) F  \]
for all $ A \in \al $ and $ F \in \alf $. From Lemma~\ref{p2}(i), it follows that 
\[  \delta (A) - \delta (I) A= \tau ( A) - A \tau (I) \]
for all $ A \in \al $.
\\ \\
%%-----------------------------------------------------------------------------------------------------------------------
\textbf{Step 6.} For linear map $\gamma: \al \rightarrow \al$ defined by $\gamma(A)=\delta(A)+A\tau(I)=\tau(A)+\delta(I)A$ we have $(\gamma, \delta, \tau)\in Tder(\al)$.
\par
From Steps 3 and 5, it follows that
\begin{align*}
\gamma(AB) & = \delta(AB)+AB\tau(I) \\
& = A \delta ( B) + \delta (A) B - A \delta (I) B+ AB\tau(I)\\
& = \delta (A) B+A(\delta ( B)-\delta (I) B)+ AB\tau(I)\\
& = \delta (A) B+A(\tau ( B)- B\tau ( I))+ AB\tau(I)\\
& = \delta (A) B+A\tau ( B)
\end{align*}
for all $A,B \in \al$. So $(\gamma, \delta, \tau)\in Tder(\al)$.
\\ \\
%%-----------------------------------------------------------------------------------------------------------------------
\textbf{Step 7.} There are elements $R,T\in \al$ such that $\gamma(A)=RA+AT$ for all $A\in \al$.
\par
Define the linear map $ \alpha : \al \to \al $ by $ \alpha(A) = \delta (A) - \delta (I) A $. It follows from Step 3 that 
\begin{align*}
\alpha(AB) & = \delta ( AB ) - \delta (I) AB \\
& = A \delta ( B) + \delta (A) B - A \delta (I ) B - \delta ( I ) AB \\
& = A \alpha (B) + \alpha( A) B .
\end{align*} 
So $ \alpha $ is a derivation. From \cite[Theorem 2.2]{che} every linear derivation of a nest algebra on a Banach space is continuous and by \cite{de0} all continuous linear derivations of a nest algebra on a Banach space are inner derivations. So $\alpha$ is inner, i.e. there exists $ S \in \al $ such that $\alpha( A) = A S - S A $ for all $ A \in \al$. Set $R=\delta(I)-S$ and $T=\tau(I)+S$. So $\delta(A)=RA+AS$ for all $A\in \al$ and
\[\gamma(A)=\delta(A)+A\tau(I)=RA+AS+A\tau(I)=RA+AT \]
for all $A\in \al$, where $R,T\in \al$.
\par
Suppose that $\gamma^{\prime}:\al \rightarrow \al$ is another linear map such that $(\gamma^{\prime}, \delta, \tau)\in Tder(\al)$. By definition of ternary derivation we see that $\gamma(AB)=\gamma^{\prime}(AB)$ for all $A,B\in \al$. Hence $\gamma=\gamma^{\prime}$.
The proof is completed.
\\ \\
\textbf{Proof of Corollary~\ref{inner}:} Assume that $(\gamma, \delta , \tau)\in Tder(\al)$. So $\delta, \tau$ satisfy $(Z)$. By Theorem~\ref{main} there are $R, T\in \al$ such that $\gamma(A)=RA+AT$ for all $A\in \al$. Since $\gamma(AB)= \delta (A) B+A\tau ( B)$ for all $A,B\in \al$, it follows that $R+T=\delta(I)+\tau(I)$. Set $S=\delta(I)-R=T-\tau(I)$. So
\[ \delta(A)=\gamma(A)-A\tau(I)=RA+AT-A\tau(I)=RA+AS\]
and
\[ \tau(A)=\gamma(A)-\delta(I)A=RA+AT-\delta(I)A=-RS+AT\]
for all $A\in \al$. We conclude that $(\gamma, \delta , \tau)$ is an inner ternary derivation.
%%___________________________________________________________________________

\subsection*{Acknowledgment}
The author likes to express his sincere thanks to the referee(s) for this paper.

% ------------------------------------------------------------------------
%GATHER{Xbib.bib}   % For Gather Purpose Only
%GATHER{Paper.bbl}  % For Gather Purpose Only
\bibliographystyle{amsplain}
\bibliography{xbib}

\end{document}